\documentclass[10pt]{amsart}%
\usepackage[british,UKenglish,USenglish,english,american]{babel}
\usepackage[utf8]{inputenc}
\usepackage{braket}
\usepackage{xfrac}
\usepackage{amsfonts, amssymb, amsmath, amsthm, amscd}
\usepackage{mathtools}
\usepackage{empheq}
\usepackage[left=4cm,right=4cm,top=4cm,bottom=4cm]{geometry}
\usepackage{color}
\usepackage{epsfig}
\usepackage{epic,eepic}
\usepackage[usenames,dvipsnames]{pstricks}
\usepackage{epsfig}
\usepackage{amsmath}
\usepackage{amsfonts}
\usepackage{amssymb}
\usepackage{graphicx}%
\setcounter{MaxMatrixCols}{30}
\providecommand{\U}[1]{\protect\rule{.1in}{.1in}}

\DeclarePairedDelimiterX{\norm}[1]{\lVert}{\rVert}{#1}
\newtheorem{thm}{Theorem}[section]
\newtheorem{thm2}{Theorem}
\newtheorem{prop}[thm]{Proposition}
\newtheorem{lem}[thm]{Lemma}

\newtheorem{cor}[thm]{Corollary}

\theoremstyle{definition}
\newtheorem{definition}[thm]{Definition}

\theoremstyle{remark}
\newtheorem{remark}[thm]{Remark}

\begin{document}
\title{Topological rigidity for closed hypersurfaces of elliptic space forms}
\author{Eduardo R. Longa}
\address{Instituto de Matem\'{a}tica e Estat\'{\i}stica, Universidade Federal do Rio
Grande do Sul, Porto Alegre, Brasil}
\email{eduardo.longa@ufrgs.br}
\author{Jaime B. Ripoll}
\address{Instituto de Matem\'{a}tica e Estat\'{\i}stica, Universidade Federal do Rio
Grande do Sul, Porto Alegre, Brasil}
\email{jaime.ripoll@ufrgs.br}

\begin{abstract}
We prove a topological rigidity theorem for closed hypersurfaces of the
Euclidean sphere and of an elliptic space form. It asserts that, under a
lower bound hypothesis on the absolute value of the principal curvatures, the
hypersurface is diffeomorphic to a sphere or to a quotient of a sphere by a
group action. We also prove another topological rigidity result for
hypersurfaces of the sphere that involves the spherical image of its usual
Gauss map.

\end{abstract}
\keywords{rigidity; hypersurfaces; topology; sphere; space forms}
\subjclass[2010]{Primary 53C24; 53C42}
\maketitle

\section{Introduction}

In \cite{E} J. H. Eschenburg defines an $\varepsilon$-convex hypersurface
$M^{n}$ immersed in a complete Riemannian manifold $N^{n+1}$, $n\geq2$,
$\varepsilon > 0$, as a hypersurface having all the principal curvatures with
the same sign and absolute value at least $\varepsilon$. He then proves that
if $M$ is compact, $\varepsilon$-convex and $N$ has nonnegative sectional
curvature, then $M$ is the boundary of a convex body in $N$; in particular, $M$
is diffeomorphic to an $n$-dimensional sphere.  Products of spheres
$\mathbb{S}^{j} \times \mathbb{S}^{k}$ in $\mathbb{S}^{n+1}$, $j+k=n$, show that
the hypothesis on the sign of the principal curvatures is seemingly essential. 
However, there are examples in which $M$ is an immersed sphere with nowhere 
zero principal curvatures and $M$ is not $\varepsilon$-convex (see Remark \ref{Cartan}). 

Our first result gives a sufficient condition for a closed, connected and oriented hyper\-surface $M$ of 
the round sphere $\mathbb{S}^{n+1}$ to be diffeomorphic to a sphere $\mathbb{S}^n$: the principal 
curvatures are required to be, in absolute value, greater than a function of the radius of a ball that 
contains $M$. Precisely, we have:

\begin{thm2} \label{sphere rigidity}
Let $M^n$ be a closed, connected and oriented immersed hypersurface of $\mathbb{S}^{n+1}$, 
$n \geq 2$, and let $R \in (0,\pi)$ be the radius of the smallest geodesic ball containing $M$. If the 
principal curvatures $\lambda_i$ of $M$ satisfy

\begin{align} \label{in}
\inf_{p \in M} \vert \lambda_i(p) \vert > \tan \left( \frac{R}{2} \right), \quad \forall \, i \in \{1, \dots, n \},
\end{align}

\noindent then $M$ is diffeomorphic to $\mathbb{S}^n$.
\end{thm2} 

In line with Theorem \ref{sphere rigidity}, Wang and Xia proved that $M$ is
diffeomorphic to a sphere assuming that the Gauss-Kronecker curvature of $M$
does not vanish at any point and that $M$ is contained in an open hemisphere
of $\mathbb{S}^{n+1}$ (\cite{xia06}, Theorem 1.1). It is possible to prove
Wang-Xia's result from Theorem \ref{sphere rigidity} using Beltrami's map, in
a similar way used in \cite{docarmo70}, and applying a homothetic deformation
of the hypersurface (see Remark \ref{XW} for more details). It should be noted that 
in Theorem \ref{sphere rigidity}, not only we allow the principal curvatures of the 
hypersurface to have different signs, but we also do not impose any restriction on 
the size of the geodesic ball in which the hypersurface is contained (the radius $R$ 
in the theorem can be any number in the interval $(0, \pi)$).

Our next result concerns hypersurfaces of an elliptic space form, that is, of a complete
Riemannian manifold of constant sectional curvature equal to $1$. The latter are known 
to be isometric to the quotient of $\mathbb{S}^{n+1}$ by a finite group of isometries that 
acts properly discontinuously on the sphere (see \cite{docarmo}, for example). Now, we
give a sufficient condition for the hypersurface $M$ to be covered by the sphere 
$\mathbb{S}^n$ in terms of its principal curvatures and of the distance from $M$ to the 
cut locus of a certain point.

\begin{thm2}
\label{space form rigidity} Let $\Gamma$ be a nontrivial group of isometries
of $\mathbb{S}^{n+1}$, $n \geq2$, acting properly discontinuously, and let
$\pi: \mathbb{S}^{n+1} \to\mathbb{S}^{n+1}/\Gamma$ be the canonical
projection. For $x_{0} \in\mathbb{S}^{n+1}/\Gamma$, let $p_{0} \in\pi
^{-1}(x_{0})$ and define%

\begin{align*}
r = \displaystyle \min_{g \in\Gamma\setminus\{e\} } d(p_{0}, g(p_{0})).
\end{align*}

Let $M^{n}$ be a closed and connected hypersurface of $\mathbb{S}^{n+1}%
/\Gamma$ and suppose that%

\begin{align*}
d(x, C(x_{0})) \leq R, \quad\forall\, x \in M,
\end{align*}

\noindent where $C(x_{0})$ is the cut locus of $x_{0}$ and $R \in(0, r/2)$. If
the principal curvatures $\lambda_{i}$ of $M$ satisfy%

\begin{align*}
\inf_{x \in M} \vert\lambda_{i}(x) \vert> \tan\left(  \frac{\pi- r/2 + R}{2}
\right)  = \cot\left(  \frac{r - 2R}{4} \right)  , \quad\forall \, i \in\{ 1,
\dots, n \},
\end{align*}

\noindent and if $\tilde{M}:=\pi^{-1}(M)$ has $k$ connected components, then
there is a $(|\Gamma|/k)$-to-one covering map from $\mathbb{S}^{n}$ to $M$ via
the action of $\Gamma$.
\end{thm2}

As an immediate consequence of Theorem \ref{space form rigidity}, we have the
following topological rigidity result for hypersurfaces of the projective
space $\mathbb{R}\mathbb{P}^{n+1}$:

\begin{cor}\label{rpn}
Let $M^{n}$ be a closed and connected hypersurface of
$\mathbb{R}\mathbb{P}^{n+1}$ and suppose that there exists a totally geodesic
codimension one projective space $\mathbb{R}\mathbb{P}^{n}$ of $\mathbb{R}%
\mathbb{P}^{n+1}$ such that%

\begin{align*}
d(x, \mathbb{R} \mathbb{P}^{n}) \leq R, \quad\forall\, x \in M,
\end{align*}

\noindent for some $R \in(0, \pi/2)$. If the principal curvatures $\lambda
_{i}$ of $M$ satisfy%

\begin{align*}
\inf_{x \in M} \vert\lambda_{i}(x) \vert> \tan\left(  \frac{\pi/2 + R}{2}
\right)  , \quad\forall \, i \in\{ 1, \dots, n \},
\end{align*}

\noindent then $M$ is diffeomorphic to either $\mathbb{S}^{n}$ or $\mathbb{R}
\mathbb{P}^{n}$.
\end{cor}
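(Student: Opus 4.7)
The plan is to view $\mathbb{R}\mathbb{P}^{n+1}$ as the quotient $\mathbb{S}^{n+1}/\Gamma$ with $\Gamma = \{e, -\mathrm{id}\} \cong \mathbb{Z}_{2}$ and to deduce the corollary as a direct application of Theorem \ref{space form rigidity}. The first task is to match the constants of the theorem against those of the corollary. Since the only nontrivial element of $\Gamma$ is the antipodal map, $d(p_{0}, -p_{0}) = \pi$ for every $p_{0} \in \mathbb{S}^{n+1}$, giving $r = \pi$. The constraint $R \in (0, \pi/2)$ in the corollary is exactly $R \in (0, r/2)$, and the curvature threshold rewrites as $\tan\bigl((\pi - r/2 + R)/2\bigr) = \tan\bigl((\pi/2 + R)/2\bigr)$, matching the hypothesis verbatim.

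The second task is to identify the cut locus of a suitable point with the given totally geodesic projective hyperplane. I would lift the totally geodesic $\mathbb{R}\mathbb{P}^{n} \subset \mathbb{R}\mathbb{P}^{n+1}$ through $\pi$ to a totally geodesic equatorial $\mathbb{S}^{n} \subset \mathbb{S}^{n+1}$ and let $p_{0}$ be either of the two poles equidistant from this equator. Inspecting the geodesics of $\mathbb{S}^{n+1}$ emanating from $p_{0}$, two such geodesics leaving $p_{0}$ in opposite directions $\pm v$ reach antipodal equatorial points $q$ and $-q$ at parameter $\pi/2$, and these points are identified in $\mathbb{R}\mathbb{P}^{n+1}$; no earlier encounter occurs. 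Setting $x_{0} = \pi(p_{0})$, this shows $C(x_{0}) = \pi(\mathbb{S}^{n}) = \mathbb{R}\mathbb{P}^{n}$, so the hypothesis $d(x, \mathbb{R}\mathbb{P}^{n}) \leq R$ is precisely $d(x, C(x_{0})) \leq R$, and every hypothesis of Theorem \ref{space form rigidity} is in force.

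Applying the theorem, the preimage $\tilde{M} = \pi^{-1}(M)$ has $k \in \{1,2\}$ connected components because $\pi$ is a two-sheeted cover. If $k = 2$, the theorem yields a one-to-one covering map $\mathbb{S}^{n} \to M$, i.e., a diffeomorphism, so $M$ is diffeomorphic to $\mathbb{S}^{n}$. If $k = 1$, it yields a two-to-one covering $\mathbb{S}^{n} \to M$ via the antipodal action of $\Gamma$, so $M \cong \mathbb{S}^{n}/\mathbb{Z}_{2} \cong \mathbb{R}\mathbb{P}^{n}$. The only step requiring real thought is the identification of $C(x_{0})$ with the chosen $\mathbb{R}\mathbb{P}^{n}$; everything else is a routine substitution of the values $r = \pi$ and $|\Gamma| = 2$ into Theorem \ref{space form rigidity}, so I do not expect any serious obstacle.
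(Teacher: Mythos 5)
Your proposal is correct and is precisely the derivation the paper intends (the paper states the corollary as an immediate consequence of Theorem \ref{space form rigidity} without writing out a proof): take $\Gamma=\{e,-\mathrm{id}\}$, so $r=\pi$, note that the cut locus of $x_{0}=\pi(p_{0})$ is the totally geodesic $\mathbb{R}\mathbb{P}^{n}$ whose lift is the equator orthogonal to $p_{0}$, and read off the two cases $k=1,2$. No issues.
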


Isometric rigidity results for hypersurfaces with non negative $r$-mean
curvature of the sphere $\mathbb{S}^{n+1}$ have been obtained in a series of
papers beginning with De Giorgi (\cite{degiorgi65}) and, independently, Simons
(\cite{simons68}, Theorem 5.2.1) in the minimal case, then by Nomizu and Smyth
(\cite{nomizu69}, Theorem 2) for constant mean curvature hypersurfaces and
finally, by Alencar, Rosenberg and Santos (\cite{alencar04}) for constant
non-negative $r$-mean curvature hypersurfaces. Later, a topological rigidity
result was obtained by Wang and Xia (\cite{xia06}, Theorem 1.2). In all these 
results it is required that the image of the Gauss map is contained in a hemisphere 
of the sphere. Unlike these authors, we obtain a topological rigidity theorem allowing 
the Gauss image of the hypersurface to lie in a neighbourhood of a great hypersphere:

\begin{thm2}
\label{variation} Let $M^{n}$ be a closed, connected and oriented immersed
hypersurface of $\mathbb{S}^{n+1}$, $n \geq2$, with unit normal $\eta: M
\to\mathbb{S}^{n+1}$. Suppose that there exists a point $p_{0} \in
\mathbb{S}^{n+1}$ such that the spherical image of $\eta$ lies in a strip of
width $L$ around the totally geodesic hypersphere 
$T = \{ x \in \mathbb{S}^{n+1} : \langle x, p_0 \rangle = 0\}$ determined by $p_{0}$,
and that $M$ is contained in the ball of radius $R$ centered at $p_{0}$. If
the principal curvatures $\lambda_{i}$ of $M$ satisfy

\begin{align*}
\inf_{p\in M}|\lambda_{i}(p)|>\frac{\sin L}{1+\cos R},\quad\forall\,
i\in\{1,\dots,n\},
\end{align*}

\noindent then $M$ is diffeomorphic to a sphere.
\end{thm2}

The technique of our paper is elementary. The results are proved by direct
calculations using a Gauss map constructed from the parallel transport in
$\mathbb{S}^{n+1}.$

\section*{Aknowledgments}
 
We would like to thank Robert Bryant for pointing out the example in Remark 
\ref{Cartan} of an immersion of $\mathbb{S}^3$ into $\mathbb{S}^4$ which 
has nonzero Gauss-Kronecker curvature and is not $\varepsilon$-convex for 
any $\varepsilon > 0$.

\section{Gauss map}

\label{gauss}

Let $M^{n}$ be a closed, connected and oriented hypersurface of $\mathbb{S}%
^{n+1}$ with unit normal vector field $\eta: M \to\mathbb{S}^{n+1}$, and fix a
point $p_{0} \in\mathbb{S}^{n+1}$ such that $-p_{0} \not \in M$. For
non-antipodal points $p, q$ in the sphere, let $\tau_{p}^{q} : T_{p}
\mathbb{S}^{n+1} \to T_{q} \mathbb{S}^{n+1}$ be the parallel transport along
the unique geodesic joining $p$ to $q$ (we agree that $\tau_{p}^{p}$ is the
identity of $T_{p} \mathbb{S}^{n+1}$). We define a Gauss map $\gamma: M
\to\mathbb{S}^{n}$ by%

\begin{align*}
\gamma(p) = \tau_{p}^{p_{0}}(\eta(p)), \quad p \in M,
\end{align*}

\noindent where $\mathbb{S}^{n}$ is the unit sphere of $T_{p_{0}}
\mathbb{S}^{n+1}$.

\begin{definition}
Given $p \in\mathbb{S}^{n+1}$ and $v \in T_{p} \mathbb{S}^{n+1}$, define a
vector field $\tilde{v}$ on $\mathbb{S}^{n+1} \setminus\{ -p_{0} \}$ by the rule%

\begin{align*}
\tilde{v}(q) = \left(  \tau_{p_{0}}^{q} \circ\tau_{p}^{p_{0}} \right)  (v),
\quad q \neq-p_{0}.
\end{align*}

\end{definition}

Let $\overline{\nabla}$ be the Riemannian connection of $\mathbb{S}^{n+1}$.
Recall that the shape operator of $M$ in the direction of $\eta$ is the
section $A$ of the vector bundle $\operatorname{End}(TM)$ of endomorphisms of
$TM$ given by%

\begin{align*}
A_{p}(v) = - \overline{\nabla}_{v} \eta, \quad p \in M, \, v \in T_{p} M.
\end{align*}

Similarly, we define another section of $\operatorname{End}(TM)$.

\begin{definition}
\label{invariant shape operator} The invariant shape operator of $M$ is the
section $\alpha$ of the bundle $\operatorname{End}(TM)$ given by%

\begin{align*}
\alpha_{p}(v) = \overline{\nabla}_{v} \widetilde{\eta(p)}, \quad p \in M, \, v
\in T_{p} M.
\end{align*}

\end{definition}

The proposition below establishes a relationship between $\gamma$ and the
extrinsic geometry of $M$.

\begin{prop}
\label{relationship} For any $p \in M$, the following identity holds:%

\begin{align*}
\tau_{p_{0}}^{p} \circ d \gamma(p) = - \left(  A_{p} + \alpha_{p} \right)  .
\end{align*}

\end{prop}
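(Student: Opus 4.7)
The plan is to compute $d\gamma(p)(v)$ for $v \in T_{p}M$ by picking a smooth curve $p:(-\varepsilon,\varepsilon)\to M$ with $p(0)=p$ and $p'(0)=v$, and then to interpret the curve
\[
V(t):=\gamma(p(t))=\tau_{p(t)}^{p_{0}}(\eta(p(t)))\in T_{p_{0}}\mathbb{S}^{n+1}
\]
from two different points of view. The key observation is that the vector field along $p$ defined by $W(t):=\tau_{p_{0}}^{p(t)}(V(t))$ equals $\eta(p(t))$, since $\tau_{p_{0}}^{p(t)}\circ\tau_{p(t)}^{p_{0}}=\mathrm{id}_{T_{p(t)}\mathbb{S}^{n+1}}$. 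Differentiating this identity in the two natural ways and equating will produce the claimed formula.

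On one hand, from $W(t)=\eta(p(t))$, the covariant derivative along $p$ at $t=0$ is
\[
\frac{DW}{dt}\bigg|_{t=0}=\overline{\nabla}_{v}\eta=-A_{p}(v).
\]
On the other hand, fix an orthonormal basis $\{f_{1},\ldots,f_{n+1}\}$ of $T_{p_{0}}\mathbb{S}^{n+1}$ and let $\tilde{f}_{i}(q):=\tau_{p_{0}}^{q}(f_{i})$ be the associated parallel-transported vector fields on $\mathbb{S}^{n+1}\setminus\{-p_{0}\}$. Expanding $V(t)=\sum_{i}V_{i}(t)f_{i}$, we have $W(t)=\sum_{i}V_{i}(t)\,\tilde{f}_{i}(p(t))$, and the product rule for covariant differentiation gives
\[
\frac{DW}{dt}\bigg|_{t=0}=\sum_{i}V_{i}'(0)\tilde{f}_{i}(p)+\sum_{i}V_{i}(0)\,\overline{\nabla}_{v}\tilde{f}_{i}.
\]
The first term is nothing but $\tau_{p_{0}}^{p}(V'(0))=\tau_{p_{0}}^{p}(d\gamma(p)(v))$. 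For the second, observe that $\sum_{i}V_{i}(0)\tilde{f}_{i}(q)=\tau_{p_{0}}^{q}(V(0))=\tau_{p_{0}}^{q}(\tau_{p}^{p_{0}}(\eta(p)))=\widetilde{\eta(p)}(q)$, so this term equals $\overline{\nabla}_{v}\widetilde{\eta(p)}=\alpha_{p}(v)$ by Definition \ref{invariant shape operator}. Equating the two expressions for $DW/dt|_{t=0}$ yields
\[
\tau_{p_{0}}^{p}(d\gamma(p)(v))+\alpha_{p}(v)=-A_{p}(v),
\]
which is the desired identity.

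The main obstacle is to disentangle the two $t$-dependencies packaged inside the definition of $\gamma$: the basepoint $p(t)$ of the parallel transport and the normal vector $\eta(p(t))$ being transported. The trick that makes the argument elementary is to read $V(t)$ in a \emph{fixed} basis of $T_{p_{0}}\mathbb{S}^{n+1}$ and then transport that basis out to $p(t)$ as a smooth frame $\tilde{f}_{i}$; after this, the product rule cleanly splits $DW/dt$ into a coefficient-change piece (which is exactly $\tau_{p_{0}}^{p}\circ d\gamma(p)$) and a frame-twist piece (which recognises itself as $\alpha_{p}$ by the very definition of the invariant shape operator). All remaining verifications are routine uses of the fact that parallel transport is a linear isometry.
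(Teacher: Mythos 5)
Your proof is correct and follows essentially the same route as the paper: both arguments expand the unit normal in the globally parallel frame $\tilde f_i=\tau_{p_0}^{(\cdot)}(f_i)$ (the paper adapts the basis so that the last vector is $\eta(p)$, you instead recognise $\sum_i V_i(0)\tilde f_i=\widetilde{\eta(p)}$ directly, which amounts to the same thing) and then apply the product rule, identifying the coefficient-derivative term with $\tau_{p_0}^{p}\circ d\gamma(p)$ and the frame-derivative term with $\alpha_p$.
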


\begin{proof}
Fix $p \in M$ and an orthonormal basis $\{v_{1}, \dots, v_{n+1} \}$ of $T_{p}
\mathbb{S}^{n+1}$ such that $v_{n+1} = \eta(p)$. The vector fields $\tilde
{v}_{1}, \dots, \tilde{v}_{n+1}$ form a global orthonormal referential of
$\mathbb{S}^{n+1} \setminus\{-p_{0}\}$, so that we can write%

\begin{align}
\eta= \sum_{i=1}^{n+1} a_{i} \tilde{v}_{i} \label{normal}%
\end{align}

\noindent for certain functions $a_{i} \in C^{\infty}(M)$. Notice that
$a_{i}(p) = 0$ for $i \in\{1, \dots, n \}$ and $a_{n+1}(p) = 1$.

For $y \in M$ we have%

\begin{align*}
\gamma(y) = \tau_{y}^{p_{0}}(\eta(y)) = \tau_{y}^{p_{0}} \left(  \sum
_{i=1}^{n+1} a_{i}(y) \tilde{v}_{i} (y) \right)  = \sum_{i=1}^{n+1} a_{i}(y)
\tau_{p}^{p_{0}}(v_{i}).
\end{align*}

\noindent Therefore, if $v \in T_{p} M$,%

\begin{align}
\tau_{p_{0}}^{p}(d \gamma(p) \cdot v) = \tau_{p_{0}}^{p} \left(  \sum
_{i=1}^{n+1} v(a_{i}) \tau_{p}^{p_{0}}(v_{i}) \right)  = \sum_{i=1}^{n+1}
v(a_{i}) v_{i}. \label{composta}%
\end{align}

\noindent From (\ref{normal}) and (\ref{composta}) we obtain%

\begin{align*}
-A_{p}(v)  &  = \overline{\nabla}_{v} \eta= \sum_{i=1}^{n+1} \overline{\nabla
}_{v}(a_{i} \tilde{v}_{i}) = \sum_{i=1}^{n+1} \left[  a_{i}(p) \overline
{\nabla}_{v} \tilde{v}_{i} + v(a_{i}) \tilde{v}_{i} (p) \right] \\
&  = \overline{\nabla}_{v} \tilde{v}_{n+1} + \sum_{i=1}^{n+1} v(a_{i}) v_{i} =
\alpha_{p}(v) + \tau_{p_{0}}^{p}(d \gamma(p) \cdot v),
\end{align*}

\noindent which gives the desired result.
\end{proof}

The next proposition gives explicit formulas for $\tau_{p}^{q}$, $\gamma$ and $\alpha$, 
obtained by straightforward computations, and hence are not presented here.

\begin{prop}
Let $p$ and $q$ be non-antipodal points in $\mathbb{S}^{n+1}$, with $p \in M$.
With the above notations, the following formulae hold:

\begin{itemize}
\item[(i)] 
\begin{align*} 
\tau_{p}^{q}(v) = - \left[  \frac{\langle v, q \rangle}{1 + \langle q, p
\rangle} \right]  (q + p) + v, \quad v \in T_{p} \mathbb{S}^{n+1}.
\end{align*}

\item[(ii)]
\begin{align*} \hspace{-1.3cm}
\gamma(p) = - \left[  \frac{\langle\eta(p), p_{0} \rangle}{1 + \langle p,
p_{0} \rangle} \right]  (p + p_{0}) + \eta(p).
\end{align*}

\item[(iii)]
\begin{align*} \hspace{-2cm}
\alpha_{p}(v) = \left[  \frac{\langle\eta(p), p_{0} \rangle}{1 + \langle p,
p_{0} \rangle} \right]  v, \quad v \in T_{p} M.
\end{align*}

\end{itemize}
\end{prop}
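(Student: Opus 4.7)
The plan is to prove the three formulae in order, obtaining (ii) and (iii) as consequences of (i).

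For part (i), I would verify the formula by the uniqueness of parallel transport along the minimizing geodesic from $p$ to $q$. Let $T\colon T_p\mathbb{S}^{n+1}\to\mathbb{R}^{n+2}$ denote the right-hand side. In $\mathbb{S}^{n+1}$, the parallel transport $\tau_p^q$ is characterized as the unique linear isometry that (a) maps $T_p\mathbb{S}^{n+1}$ into $T_q\mathbb{S}^{n+1}$, (b) acts as the identity on $T_p\mathbb{S}^{n+1}\cap\mathrm{span}\{p,q\}^\perp$, and (c) sends the unit tangent at $p$ of the geodesic $\overline{pq}$ to the unit tangent at $q$. I would check (a) by a direct computation of $\langle T(v),q\rangle$, using $\langle v,p\rangle=0$; (b) is immediate from $\langle v,q\rangle=0$ for $v\perp\mathrm{span}\{p,q\}$; and (c) reduces to a short algebraic check with $u=(q-\langle p,q\rangle p)/\sqrt{1-\langle p,q\rangle^2}$, showing that $T(u)=(\langle p,q\rangle\, q-p)/\sqrt{1-\langle p,q\rangle^2}$, which is the unit tangent at $q$.

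Part (ii) is then simply the substitution $q=p_0$, $v=\eta(p)$ in (i), since $\gamma(p)=\tau_p^{p_0}(\eta(p))$ by definition. For part (iii), I would work directly from $\alpha_p(v)=\overline{\nabla}_v\widetilde{\eta(p)}$. Setting $a:=\tau_p^{p_0}(\eta(p))\in T_{p_0}\mathbb{S}^{n+1}$, which is a fixed vector, formula (i) applied at the base point $p_0$ gives
\[
\widetilde{\eta(p)}(q)=\tau_{p_0}^q(a)=a-\frac{\langle a,q\rangle}{1+\langle q,p_0\rangle}\,(q+p_0).
\]
I would take the Euclidean directional derivative of this expression in $q$ along $v\in T_pM$, evaluate at $q=p$, and project onto $T_p\mathbb{S}^{n+1}$. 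Two scalar inputs are needed, and a second application of (i) to $a=\tau_p^{p_0}(\eta(p))$, combined with $\langle v,p\rangle=\langle v,\eta(p)\rangle=0$, supplies them:
\[
\langle a,p\rangle=-\langle\eta(p),p_0\rangle,\qquad\langle a,v\rangle=-\frac{\langle\eta(p),p_0\rangle\,\langle v,p_0\rangle}{1+\langle p,p_0\rangle}.
\]

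The main obstacle is the cancellation in part (iii). After differentiating, the coefficient of $(p+p_0)$ is proportional to $\langle a,v\rangle(1+\langle p,p_0\rangle)-\langle a,p\rangle\,\langle v,p_0\rangle$, and the two identities above conspire to make this vanish identically. The only surviving term is $-\frac{\langle a,p\rangle}{1+\langle p,p_0\rangle}\,v=\frac{\langle\eta(p),p_0\rangle}{1+\langle p,p_0\rangle}\,v$, which already lies in $T_p\mathbb{S}^{n+1}$, so no further projection is needed and we obtain precisely the formula in (iii). Everything else is bookkeeping, but keeping track of which inner products vanish (by tangency of $v$ and $\eta(p)$ to $T_p\mathbb{S}^{n+1}$) and which do not is the only delicate point.
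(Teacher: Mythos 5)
Your proposal is correct. The paper gives no proof of this proposition at all --- it states only that the formulae are ``obtained by straightforward computations, and hence are not presented here'' --- so there is nothing to compare against; your argument supplies exactly the computation the authors omit. The verification of (i) via the characterization of $\tau_p^q$ (isometry into $T_q\mathbb{S}^{n+1}$, identity on $\mathrm{span}\{p,q\}^\perp$, unit tangent to unit tangent) checks out, as do the two scalar identities $\langle a,p\rangle=-\langle\eta(p),p_0\rangle$ and $\langle a,v\rangle=-\langle\eta(p),p_0\rangle\langle v,p_0\rangle/(1+\langle p,p_0\rangle)$, which indeed force the coefficient of $(p+p_0)$ to vanish in (iii) and leave only the multiple of $v$ claimed in the statement.
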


\section{Proofs of the Theorems}

\label{sphere}

We begin with Theorem \ref{sphere rigidity}.

\begin{proof}
[Proof of Theorem \ref{sphere rigidity}]Let $\eta: M \to\mathbb{S}^{n+1}$ be
the unit normal vector field which gives rise to the orientation of $M$, and
let $p_{0}$ be the center of a geodesic ball of radius $R$ containing $M$.
Define a function $c : M \to\mathbb{R}$ by%

\begin{align*}
c(p) = \frac{\langle\eta(p), p_{0} \rangle}{1 + \langle p, p_{0} \rangle},
\quad p \in M,
\end{align*}

\noindent and a vector field $E$ on $\mathbb{S}^{n+1}$ by%

\begin{align*}
E(p) = p_{0} - \langle p, p_{0} \rangle p, \quad p \in\mathbb{S}^{n+1}.
\end{align*}

\noindent Notice that $\langle\eta(p), E(p) \rangle= \langle\eta(p), p_{0}
\rangle$ for $p$ in $M$. Then, using the Cauchy-Schwarz inequality, we have
the following estimate for $c$:%

\begin{align*}
\left\vert c(p) \right\vert \leq\frac{\norm{\eta(p)} \norm{E(p)}}{1 + \langle
p, p_{0} \rangle} = \frac{\sqrt{ 1 - \langle p, p_{0} \rangle^{2} }}{1 +
\langle p, p_{0} \rangle} = \sqrt{ \frac{ 1 - \langle p, p_{0} \rangle}{1 +
\langle p, p_{0} \rangle} }, \quad\forall\, p \in M.
\end{align*}

\noindent Thus,%

\begin{align*}
\left\vert c(p) \right\vert \leq\sqrt{ \frac{ 1 - \cos d(p, p_{0}) }{1 + \cos
d(p, p_{0})} } = \tan\left(  \frac{d(p, p_{0})}{2} \right)  \leq\tan\left(
\frac{R}{2} \right)  , \quad\forall\, p \in M.
\end{align*}

Fix $p\in M$. Choosing an orthonormal basis of $T_{p}M$ that diagonalizes the
shape operator $A_{p}$, the matrix of $-\tau_{p_{0}}^{p}\circ d\gamma(p)$ with
respect to this basis is diagonal with entries $\lambda_{i}(p)+c(p)\neq0$ (see
Proposition \ref{relationship}). Therefore, this map is an isomorphism for
each $p\in M$, and so is $d\gamma(p)$. Since $M$ is compact, $\gamma$ is a
covering map, and since $M$ is connected with $n\geq2$, $\gamma$ is a diffeomorphism.
\end{proof}

\begin{remark}
Condition (\ref{in}) does not seem to be sharp. But it is easy to see that if
we require that

\begin{align}
\inf_{p\in M}\vert\lambda_{i}(p)\vert>\varepsilon\tan\left(  \frac{R}{2}\right)
,\quad\forall\,i\in\{1,\dots,n\} \label{ce}
\end{align}

\noindent for $\varepsilon\in(0,\sqrt{2}-1)$, then the result of the theorem may be 
false. Indeed, taking

\begin{align*}
M_{r} = \mathbb{S}^{1}(r) \times\mathbb{S}^{n-1}(s) = \left\lbrace (x,y)
\in\mathbb{R}^{2} \times\mathbb{R}^{n} : \norm{x} = r, \norm{y} = s
\right\rbrace \subset\mathbb{S}^{n+1},
\end{align*}

\noindent with $s = \sqrt{1-r^{2}}$, one may prove that the radius $R$ of the
largest open geodesic ball of $\mathbb{S}^{n+1}$ that does not intersect
$M_{r}$\ is given by%

\begin{align*}
\cos R = \min\{ r, s \}.
\end{align*}

\noindent Moreover, the principal curvatures of $M_{r}$ are $\lambda
_{1}=-\sqrt{1-r^{2}}/{r}$ and $\lambda_{2}=\cdots=\lambda_{n}=r/\sqrt{1-r^{2}%
}$. A calculation shows that one can chose $r$ so that the principal
curvatures of $M_{r}$ satisfy (\ref{ce}).
\end{remark}

\begin{remark} \label{Cartan}
We outline here a construction due to E. Cartan (\cite{cartan}) that shows the existence
of immersed $3$-spheres into $\mathbb{S}^4$ with nonzero principal curvatures and 
which are not $\varepsilon$-convex. Let $V$ be the space of traceless symmetric matrices 
of order $3$ over $\mathbb{R}$, a vector space of real dimension $5$. The group 
$\mathrm{SO}(3)$ acts on $V$ via conjugation: if $m \in V$ and $A \in  \mathrm{SO}(3)$, 
let $A \cdot m = AmA^{-1}$.  This is an irreducible representation of $\mathrm{SO}(3)$, 
and the described action leaves invariant the (positive definite) quadratic form

\begin{align*}
Q(m) = \frac{1}{6} \mathrm{tr}(m^2),
\end{align*}

\noindent as well as the cubic form

\begin{align*}
C(m) = \frac{1}{2}\det(m).
\end{align*}

Let $\mathbb{S}^4 \subset V$ be the unit $4$-sphere, defined by $\mathrm{tr}(m^2)=6$. 
Since every $m \in V$ can be diagonalized by an element of $\mathrm{SO}(3)$, one easily 
verifies that $-1\leq C(m)\leq 1$ for all $m \in \mathbb{S}^4$. The examples we announced 
are the level sets $C(m) = r$ for $\vert r \vert < 1$.  They are clearly $\mathrm{SO}(3)$-orbits, 
since the only invariants of a symmetric matrix under the $\mathrm{SO}(3)$-action are its 
eigenvalues, which are completely determined by the values of $Q(m)$ and $C(m)$ (since 
$\mathrm{tr}(m)=0$).  

The level set $C(m)=0$ is a minimal hypersurface, with one of its principal curvatures 
(necessarily constant) equal to $0$ and the other two of opposite sign.  Meanwhile, as Cartan 
shows, the level sets $C(m)= \cos (3\theta)$, for $0<\theta<\pi/6$, have three nonzero 
principal curvatures (necessarily constant) given by

\begin{align*}
\cot \left(\theta-\tfrac\pi3\right),\quad \cot\left(\theta\right),\quad \cot\left(\theta+\tfrac\pi3\right)
\end{align*}

\noindent (the first one is negative and the other two are positive). Since each such orbit is 
diffeomorphic to $\mathrm{SO}(3)/D$, where $D \cong \mathbb{Z}_2 \oplus \mathbb{Z}_2$ 
is the finite group of order $4$ consisting of the diagonal matrices, and since $\mathrm{SO}(3)$ 
is, itself, double-covered by the $3$-sphere, it follows that the simply-connected cover of each
such orbit is $8$-fold and is diffeomorphic to the $3$-sphere. Thus, we get an immersion of the 
$3$-sphere into $\mathbb{S}^4$ with the claimed properties.
\end{remark}

\begin{remark}\label{XW}
Theorem \ref{sphere rigidity} implies Theorem 1.1 of \cite{xia06},
which states that if an immersed closed and orientable hypersurface $M^{n}$
($n\geq2$) of the sphere $\mathbb{S}^{n+1}$ has non-vanishing Gauss-Kronecker
curvature and is contained in an open hemisphere, then it is diffeomorphic to
a sphere. We give here a sketch of the proof. To begin with, let $p_{0}$ be
the north pole of $\mathbb{S}^{n+1}$ and let $\mathbb{S}_{+}^{n+1}$ be the
open hemisphere centered at $p_{0}$. The Beltrami map $B:\mathbb{S}_{+}%
^{n+1}\rightarrow\mathbb{R}^{n+1}\cong T_{p_{0}}\mathbb{S}^{n+1}$ is the
diffeomorphism obtained by central projection:

\begin{align*}
B(p)=\left(  \frac{p_{1}}{p_{n+2}},\dots,\frac{p_{n+1}}{p_{n+2}}\right)
,\quad p=(p_{1},\dots p_{n+2})\in\mathbb{S}_{+}^{n+1}.
\end{align*}

\noindent For $t>0$, let $H_{t}:\mathbb{R}^{n+1}\rightarrow\mathbb{R}^{n+1}$
be the homothety $x\mapsto tx$. The map we are interested in is $C_{t}%
=B^{-1}\circ H_{t}\circ B$. After a rotation, we may suppose $M$ is contained
in $\mathbb{S}_{+}^{n+1}$. By Theorem \ref{sphere rigidity} (with
$R=\tfrac{\pi}{2}$), $M$ would be diffeomorphic to $\mathbb{S}^{n}$ if all its
principal curvatures were bigger than $1$ in absolute value. This is not
necessarily true. However, defining $M_{t}=C_{t}(M)$, it is possible to show
that if $t$ is sufficiently small, then this bound on the principal curvatures
holds for $M_{t}$ (actually, the principal curvatures of $M_t$ go to infinity
as $t$ goes to zero). So, $M_{t}$, and hence $M$, will be diffeomorphic to
$\mathbb{S}^{n}$.
\end{remark}

We now prove Theorem \ref{variation}.

\begin{proof}
[Proof of Theorem \ref{variation}]Notice that $\langle\eta(p), p_{0} \rangle=
\pm\sin d(\eta(p), T)$. So, we have the following estimate for the function
$c$ defined in the proof of Theorem \ref{sphere rigidity}:%

\begin{align*}
\vert c(p) \vert= \frac{\vert\langle\eta(p), p_{0} \rangle\vert}{1 + \langle
p, p_{0} \rangle} = \frac{\sin d(\eta(p), T)}{1 + \cos d(p, p_{0})} \leq
\frac{\sin L}{1 + \cos R}.
\end{align*}

Reasoning analogously as in the proof of that theorem, we conclude that
$\gamma: M \to\mathbb{S}^{n}$ is a global diffeomorphism.
\end{proof}

Before proving Theorem \ref{space form rigidity}, we need some facts about
fundamental domains of a group action, following \cite{ozols74}. Let $\Gamma$
be a nontrivial group of isometries of $\mathbb{S}^{n+1}$ and denote
$\Gamma\setminus\{e\}$ by $\Gamma^{\ast}$. We shall make the assumption that
$\Gamma$ acts on the sphere properly discontinuously, meaning that each point
$p\in\mathbb{S}^{n+1}$ has a neighborhood $U$ such that $U\cap g(U)=\emptyset$
for $g\in\Gamma^{\ast}$.

\begin{definition}
For $p \neq q \in\mathbb{S}^{n+1}$, define the sets%

\begin{align*}
H_{p,q} = \{ x \in\mathbb{S}^{n+1} : d(p, x) < d(q, x) \}\\
A_{p,q} = \{ x \in\mathbb{S}^{n+1} : d(p, x) = d(q, x) \}.
\end{align*}

The fundamental domain of $\Gamma$ centered at $p$ is%

\begin{align*}
\Delta_{p} = \bigcap_{g \in\Gamma^{\ast}} H_{p, g(p)}.
\end{align*}

\end{definition}

We need the following facts:

\begin{prop}
[\cite{ozols74}, Proposition 3.4]\label{prop1} For each $g \in\Gamma^{\ast}$
and $p \in\mathbb{S}^{n+1}$, $\overline{\Delta}_{p} \cap\overline{\Delta
}_{g(p)} \subset A_{p, g(p)}$.
\end{prop}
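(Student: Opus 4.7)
The plan is to reduce the statement to two applications of a single elementary inclusion, namely $\overline{\Delta}_p \subset \overline{H}_{p, h(p)}$ for every $h \in \Gamma^\ast$. Since $H_{p,h(p)}$ is open and $\overline{H}_{p,h(p)} = \{x : d(p,x) \leq d(h(p), x)\}$ is closed (this follows from continuity of the distance function on $\mathbb{S}^{n+1}$), we have $\Delta_p \subset H_{p,h(p)} \subset \overline{H}_{p,h(p)}$, and taking closures gives the desired containment. No hypothesis beyond proper discontinuity is needed for this step.

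Next I would specialize this inclusion twice. Fix $x \in \overline{\Delta}_p \cap \overline{\Delta}_{g(p)}$. Applying the previous step to $\overline{\Delta}_p$ with the specific choice $h = g \in \Gamma^\ast$ yields
\begin{align*}
d(p, x) \leq d(g(p), x).
\end{align*}
Applying it to $\overline{\Delta}_{g(p)}$ with the choice $h = g^{-1} \in \Gamma^\ast$ (so that $h(g(p)) = p$) gives
\begin{align*}
d(g(p), x) \leq d(p, x).
\end{align*}
Combining the two inequalities produces $d(p,x) = d(g(p), x)$, which is precisely the condition $x \in A_{p, g(p)}$.

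The main potential obstacle is a subtle but standard point: one must verify that $\overline{H}_{p,h(p)}$ really is the closed half-space defined by the non-strict inequality. On the sphere this is true because $H_{p,h(p)}$ is nonempty (it contains $p$ itself since $p \neq h(p)$ by proper discontinuity) and the bisector $A_{p,h(p)}$ is a totally geodesic hypersphere with empty interior; thus every point on the bisector is a limit of points with strict inequality. Once this is granted, the argument is essentially a two-line symmetry argument using the two elements $g$ and $g^{-1}$ of $\Gamma^\ast$, and no deeper structure of fundamental domains is required for this particular statement.
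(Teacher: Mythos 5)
Your argument is correct. Note first that the paper does not prove this proposition at all: it is quoted verbatim from Ozols (Proposition 3.4 of the cited reference), so there is no in-paper proof to compare against. Your two-sided symmetry argument is the standard one and is complete: $\Delta_p\subset H_{p,g(p)}$ gives $\overline{\Delta}_p\subset\{x: d(p,x)\leq d(g(p),x)\}$, and applying the same observation to $\Delta_{g(p)}$ with the element $g^{-1}\in\Gamma^{\ast}$ (legitimate, since $g\neq e$ implies $g^{-1}\neq e$, and freeness of the action guarantees $g(p)\neq p$ so that $H_{g(p),p}$ is defined) gives the reverse inequality. One small simplification: the ``potential obstacle'' you raise is not actually needed. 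You never use the equality $\overline{H}_{p,h(p)}=\{x:d(p,x)\leq d(h(p),x)\}$; you only use the inclusion $\overline{H}_{p,h(p)}\subset\{x:d(p,x)\leq d(h(p),x)\}$, which is immediate because the right-hand side is a closed set containing $H_{p,h(p)}$. The harder direction, that every point of the bisector $A_{p,h(p)}$ is a limit of points satisfying the strict inequality, plays no role here, so the discussion of the bisector being a totally geodesic hypersphere can be dropped.
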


\begin{prop}
[\cite{ozols74}, Proposition 3.5]\label{prop2} For $p \in\mathbb{S}^{n+1}$,%

\begin{align*}
\partial\Delta_{p} = \bigcup_{g \in\Gamma^{\ast}} \partial\Delta_{p}
\cap\partial\Delta_{g(p)}.
\end{align*}

\end{prop}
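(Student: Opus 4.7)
The plan is to prove the two inclusions separately, the containment $\supseteq$ being immediate from the definition. For the converse, I would first record two preparatory observations. Because $\Gamma$ acts properly discontinuously on the compact sphere $\mathbb{S}^{n+1}$, the group $\Gamma$ is necessarily finite; hence $\Delta_p$ is a finite intersection of the open sets $H_{p,g(p)}$, is itself open, and satisfies $\partial \Delta_p = \overline{\Delta}_p \setminus \Delta_p$. Secondly, since $\overline{H_{p,g(p)}} = \{x : d(p,x) \leq d(g(p),x)\}$, the always valid inclusion $\overline{\Delta}_p \subseteq \bigcap_{g \in \Gamma^\ast} \overline{H_{p,g(p)}}$ shows that every $x \in \overline{\Delta}_p$ satisfies $d(p,x) \leq d(g(p),x)$ for all $g \in \Gamma^\ast$.

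Given $x \in \partial \Delta_p$, the strategy is to pin down a specific $g_0 \in \Gamma^\ast$ for which $x \in \partial \Delta_{g_0(p)}$. Since $x \notin \Delta_p$, at least one defining strict inequality must fail, so there exists $g_0 \in \Gamma^\ast$ with $d(p,x) \geq d(g_0(p),x)$; combined with the preceding observation, this forces the equality $d(p,x) = d(g_0(p),x)$, consistent with Proposition \ref{prop1}. The next step is to reindex the defining intersection for $\Delta_{g_0(p)}$: writing every $h \in \Gamma^\ast$ as $h = k g_0^{-1}$ with $k \in \Gamma \setminus \{g_0\}$ yields
\begin{align*}
\Delta_{g_0(p)} = \bigcap_{k \in \Gamma \setminus \{g_0\}} H_{g_0(p), k(p)}.
\end{align*}

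From this reformulation, $x \notin \Delta_{g_0(p)}$ follows by taking $k = e$: the corresponding strict inequality $d(g_0(p),x) < d(p,x)$ fails by equality. To show $x \in \overline{\Delta}_{g_0(p)}$, I rewrite $d(g_0(p),x) = d(p,x)$ and verify $d(g_0(p),x) \leq d(k(p),x)$ for every $k \in \Gamma \setminus \{g_0\}$: it is an equality when $k = e$, and for $k \in \Gamma^\ast$ it follows from the closure condition on $x$ established at the outset. Since $\Delta_{g_0(p)}$ is open, this places $x \in \partial \Delta_{g_0(p)}$ and completes the argument. The only point I expect requires care is the interaction between closure and intersection of the $H_{p,g(p)}$, but finiteness of $\Gamma$ renders this routine, since I never rely on more than the trivially valid inclusion $\overline{\bigcap_g H_{p,g(p)}} \subseteq \bigcap_g \overline{H_{p,g(p)}}$.
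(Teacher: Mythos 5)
The paper itself offers no proof of this proposition --- it is imported verbatim from Ozols \cite{ozols74} --- so the only question is whether your argument stands on its own. It has one genuine gap, and it sits precisely at the point you dismiss as routine in your closing sentence. To conclude that $x \in \overline{\Delta}_{g_0(p)}$ you verify that $d(g_0(p),x) \leq d(k(p),x)$ for every $k \in \Gamma\setminus\{g_0\}$, i.e.\ that $x$ lies in $\bigcap_{k} \overline{H_{g_0(p),k(p)}}$, and from this you infer $x \in \overline{\Delta}_{g_0(p)} = \overline{\bigcap_{k} H_{g_0(p),k(p)}}$. That is the inclusion $\bigcap_k \overline{H_k} \subseteq \overline{\bigcap_k H_k}$ --- exactly the \emph{reverse} of the ``trivially valid'' inclusion you claim is all you ever use, and it is false for general open sets (the two open semicircles of a circle have empty intersection, while their closures meet in two points). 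So the decisive step is unjustified as written, and the final paragraph of your proposal misidentifies which direction of the closure/intersection interchange you actually need. Finiteness of $\Gamma$ alone does not repair this.

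The gap is fixable, but only by using the geometry of the sets $H_{p,q}$. On $\mathbb{S}^{n+1}$ one has $d(p,x) < d(q,x)$ if and only if $\langle p - q, x\rangle > 0$, so each $H_{g_0(p),k(p)}$ is the trace on the sphere of an open half-space of $\mathbb{R}^{n+2}$ through the origin, and $g_0(p)$ itself satisfies all of these strict inequalities. Given your $x$ satisfying all the non-strict ones, the normalized segment $x_t = \left( (1-t)x + t\,g_0(p)\right) / \lVert (1-t)x + t\,g_0(p) \rVert$ lies in $\Delta_{g_0(p)}$ for every $t \in (0,1]$ and tends to $x$ as $t \to 0^{+}$, which yields $x \in \overline{\Delta}_{g_0(p)}$. (One must exclude $x = -g_0(p)$, but in that case $d(p,x) = d(g_0(p),x) = \pi$ would force $p = g_0(p)$, impossible since $g_0 \in \Gamma^{\ast}$ acts without fixed points.) Everything else in your write-up --- the finiteness of $\Gamma$ and openness of $\Delta_p$, the trivial inclusion $\supseteq$, the choice of $g_0$ and the resulting equality $d(p,x) = d(g_0(p),x)$, the reindexing of $\Delta_{g_0(p)}$ over $\Gamma\setminus\{g_0\}$, and the verification that $x \notin \Delta_{g_0(p)}$ via $k = e$ --- is correct.
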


From these, we prove a series of lemmas.

\begin{lem}
\label{small ball} For $p \in\mathbb{S}^{n+1}$, define%

\begin{align*}
r = \displaystyle \min_{g \in\Gamma^{\ast}} d(p, g(p)).
\end{align*}

\noindent Then $B_{r/2}(p) \subset\Delta_{p}$. In particular, $B_{r/2}(p)
\cap\partial\Delta_{p} = \emptyset$.
\end{lem}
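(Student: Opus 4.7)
The plan is to unpack the definitions and apply the triangle inequality directly. Pick $x \in B_{r/2}(p)$, so that $d(p,x) < r/2$. To show $x \in \Delta_p$, I need to verify that $d(p,x) < d(g(p),x)$ for every $g \in \Gamma^*$. For any such $g$, the triangle inequality gives
\begin{align*}
d(g(p), x) \;\geq\; d(p, g(p)) - d(p, x) \;\geq\; r - d(p, x) \;>\; r - \tfrac{r}{2} \;=\; \tfrac{r}{2} \;>\; d(p, x),
\end{align*}
where in the second inequality I use the definition of $r$ as the minimum of $d(p, g(p))$ over $\Gamma^*$. Hence $x \in H_{p, g(p)}$ for every $g \in \Gamma^*$, which is exactly the condition $x \in \Delta_p$. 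This proves the inclusion $B_{r/2}(p) \subset \Delta_p$.

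For the second statement, I would observe that because $\Gamma$ acts properly discontinuously on the compact sphere $\mathbb{S}^{n+1}$, the group $\Gamma$ is finite, so $\Gamma^*$ is finite. Each $H_{p, g(p)}$ is open (it is defined by a strict inequality involving the continuous distance function), and $\Delta_p$ is a finite intersection of such open sets, hence open itself. Since $B_{r/2}(p) \subset \Delta_p$ and $\Delta_p$ is open, $B_{r/2}(p)$ lies in the interior of $\Delta_p$, so it cannot meet $\partial \Delta_p$.

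There is no real obstacle here — the argument is just careful bookkeeping with the triangle inequality and the definitions of $H_{p,q}$ and $\Delta_p$. The only point that requires a brief justification is that $\Delta_p$ is open, which reduces to the finiteness of $\Gamma$ guaranteed by the proper discontinuity hypothesis together with the compactness of $\mathbb{S}^{n+1}$.
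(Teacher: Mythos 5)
Your proof is correct, but it takes a genuinely different and more elementary route than the paper's. The paper argues by contradiction: it assumes a point $q$ of the ball lies on $\partial\Delta_p$, invokes Propositions \ref{prop2} and \ref{prop1} (Ozols' structure results for the boundary of a fundamental domain) to place $q$ on a bisector $A_{p,g_0(p)}$, and then derives $d(p,g_0(p))<r$ from the triangle inequality. You instead verify the defining inequalities of $\Delta_p$ directly: for $x$ with $d(p,x)<r/2$ and any $g\in\Gamma^*$, the estimate $d(g(p),x)\geq d(p,g(p))-d(p,x)>r/2>d(p,x)$ puts $x$ in every $H_{p,g(p)}$. This buys two things. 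First, it avoids the quoted propositions entirely for this lemma (they are still needed elsewhere, e.g.\ in Lemma \ref{antipodal}). Second, it sidesteps a small gap in the paper's opening step --- the claim that if the ball is not contained in $\Delta_p$ then it must meet $\partial\Delta_p$ tacitly uses connectedness of the ball together with openness of $\Delta_p$, neither of which is mentioned. Your observation that $\Delta_p$ is open (via finiteness of $\Gamma$, which follows from proper discontinuity on the compact sphere) is exactly the justification needed for the ``in particular'' clause, and is a point the paper leaves implicit.
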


\begin{proof}
Suppose that this ball is not contained in the fundamental domain centered at
$p$. Then there exists $q$ belonging to the ball and to $\partial\Delta_{p}$.
Then, from Proposition \ref{prop2}, there exists $g_{0} \in\Gamma^{\ast}$ such
that $q \in\partial\Delta_{p} \cap\partial\Delta_{g_{0}(p)}$. By Proposition
\ref{prop1}, it follows that $q \in\overline{\Delta}_{p} \cap\overline{\Delta
}_{g_{0}(p)} \subset A_{p, g_{0}(p)}$. Thus, $d(p, q) < r/2$ and $d(g_{0}(p),
q) = d(p, q) < r/2$. Hence,%

\begin{align*}
d(p, g_{0}(p)) \leq d(p, q) + d(g_{0}(p), q) < \frac{r}{2} + \frac{r}{2} = r,
\end{align*}

\noindent contrary to the definition of $r$.
\end{proof}

Let $\mathbb{S}^{n+1}/\Gamma$ be the quotient space and denote by $\pi:
\mathbb{S}^{n+1} \to\mathbb{S}^{n+1}/\Gamma$ the canonical projection. The
latter is a Riemannian covering map when we endow $\mathbb{S}^{n+1}/\Gamma$
with the suitable metric.

\begin{lem}
\label{isometry} The restriction of $\pi$ to a fundamental domain $\Delta_{p}$
is an isometry onto its image.
\end{lem}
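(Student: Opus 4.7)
My strategy is to exploit the fact that $\pi\colon\mathbb{S}^{n+1}\to\mathbb{S}^{n+1}/\Gamma$ is a Riemannian covering map and hence a local isometry; in particular, its restriction to the open set $\Delta_p$ is already automatically a local isometry. The entire content of the lemma therefore reduces to verifying that $\pi|_{\Delta_p}$ is injective, since a bijective local isometry onto its image is genuinely an isometry onto its image.

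To prove injectivity, I would argue by contradiction: assume that there exist distinct points $q_1, q_2 \in \Delta_p$ with $\pi(q_1) = \pi(q_2)$, so that $q_2 = g(q_1)$ for some $g \in \Gamma^\ast$. The defining property of $\Delta_p$ is that every $x \in \Delta_p$ satisfies the strict inequality $d(p,x) < d(h(p),x)$ for every $h \in \Gamma^\ast$. Applying this to $q_1$ with $h = g^{-1}$ (which lies in $\Gamma^\ast$ because $g \neq e$), and using that $g$ is an isometry so that $d(g^{-1}(p), q_1) = d(p, g(q_1)) = d(p, q_2)$, one gets $d(p,q_1) < d(p,q_2)$. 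Applying the same property to $q_2$ with $h = g$, and using the isometry property of $g$ symmetrically, one gets $d(p,q_2) < d(p,q_1)$. Together these produce the impossible inequality $d(p,q_1) < d(p,q_1)$, completing the argument.

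I do not foresee any serious obstacle in carrying this out. The entire conceptual content is the observation that $\Delta_p$ is engineered to contain at most one representative from each $\Gamma$-orbit; the computational content is the elementary bookkeeping of the action of $\Gamma$ by isometries. In particular, Propositions \ref{prop1} and \ref{prop2} are not needed here (they were used in Lemma \ref{small ball}), and the strictness of the inequality in the definition of $H_{p,q}$ is exactly what drives the contradiction.
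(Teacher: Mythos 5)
Your proposal is correct and follows essentially the same route as the paper: reduce to injectivity via the local isometry property of the covering map, then use the strict inequality defining $\Delta_p$ together with the fact that $\Gamma$ acts by isometries to derive a contradiction. The only cosmetic difference is that the paper fixes an ordering $d(p,q_1)\leq d(p,q_2)$ without loss of generality and applies the defining inequality once, whereas you apply it twice symmetrically; both yield the same contradiction.
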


\begin{proof}
Since $\pi$ is a local isometry, it suffices to prove that the restriction of
$\pi$ to $\Delta_{p}$ is injective. Suppose $\pi(q_{1}) = \pi(q_{2})$, with
$q_{i} \in\Delta_{p}$. Without loss of generality, suppose $d(p, q_{1}) \leq
d(p, q_{2})$. There exists $g \in\Gamma$ such that $g(q_{1}) = q_{2}$. If $g
\neq e$, then we would have%

\begin{align*}
d(p, q_{2}) < d(g(p), q_{2}) = d(g(p), g(q_{1})) = d(p, q_{1}),
\end{align*}

\noindent contrary to our assumption. Thus, $g = e$ and $q_{1} = q_{2}$.
\end{proof}

\begin{lem}
\label{antipodal} For $p \in\mathbb{S}^{n+1}$, the antipodal point of $p$ does
not belong to $\overline{\Delta}_{p}$.
\end{lem}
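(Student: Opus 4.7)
The plan is to argue by contradiction using the fact that the antipodal point is at maximal distance $\pi$ from $p$, combined with the observation that a properly discontinuous action has no fixed points.

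First, I would recall that since $\Gamma$ acts properly discontinuously on $\mathbb{S}^{n+1}$, the action is free: if some $g \in \Gamma^{*}$ satisfied $g(p) = p$, then any neighborhood $U$ of $p$ would contain $p \in U \cap g(U)$, contradicting the definition of proper discontinuity. In particular, for every $g \in \Gamma^{*}$, $g(p) \neq p$, so $d(g(p), -p) < \pi$.

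Next, I would use the definition of $\Delta_p$ to control its closure. Since $\Delta_p = \bigcap_{g \in \Gamma^{*}} H_{p,g(p)}$ and $\overline{H_{p,g(p)}} = \{x \in \mathbb{S}^{n+1} : d(p,x) \leq d(g(p), x)\}$, we have
\begin{align*}
\overline{\Delta}_p \subset \bigcap_{g \in \Gamma^{*}} \{x \in \mathbb{S}^{n+1} : d(p, x) \leq d(g(p), x)\}.
\end{align*}
Since $\Gamma$ is nontrivial, $\Gamma^{*}$ is nonempty; pick any $g \in \Gamma^{*}$. If $-p \in \overline{\Delta}_p$, the inclusion above would give $\pi = d(p, -p) \leq d(g(p), -p)$, forcing $d(g(p), -p) = \pi$ and hence $g(p) = p$, contradicting the freeness established in the previous step.

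There is no real obstacle here: the argument is a one-shot application of the fact that the antipode sits at the maximal distance on the sphere, together with freeness of the action. The only mildly delicate point is being careful to pass from the open half-space $H_{p,g(p)}$ to its closure when working with $\overline{\Delta}_p$, which is handled by the standard fact that the closure of a strict inequality is the non-strict one in a complete metric space.
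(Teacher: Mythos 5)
Your proof is correct, and it reaches the conclusion by a genuinely more elementary route than the paper. The paper splits $\overline{\Delta}_p$ into $\Delta_p$ and $\partial\Delta_p$, handles the interior case directly, and for the boundary case invokes the structural results on fundamental domains (Propositions \ref{prop1} and \ref{prop2}) to produce a specific $g_0\in\Gamma^{\ast}$ with $-p\in A_{p,g_0(p)}$, whence $d(g_0(p),-p)=\pi$ and $g_0(p)=p$. You bypass both the case split and those two propositions with the purely point-set observation that
\begin{align*}
\overline{\Delta}_p=\overline{\textstyle\bigcap_{g}H_{p,g(p)}}\subset\bigcap_{g}\overline{H_{p,g(p)}}\subset\bigcap_{g}\{x:d(p,x)\leq d(g(p),x)\},
\end{align*}
the last set being closed as a sublevel set of a continuous function; the non-strict inequality $\pi=d(p,-p)\leq d(g(p),-p)$ then holds for any $g\in\Gamma^{\ast}$, and maximality of $\pi$ together with uniqueness of the antipode forces $g(p)=p$, contradicting freeness (which you justify explicitly, whereas the paper merely asserts that no element of $\Gamma^{\ast}$ has a fixed point). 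Both arguments rest on the same two facts in the end; what yours buys is independence from the Ozols machinery for this particular lemma (that machinery is still needed elsewhere, e.g.\ in Lemma \ref{small ball}). One minor imprecision: you assert the equality $\overline{H_{p,g(p)}}=\{x:d(p,x)\leq d(g(p),x)\}$, but only the inclusion $\subset$ is needed and that one is immediate; the reverse inclusion, while true on the sphere, would require a small separate argument and is best not claimed.
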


\begin{proof}
Suppose the contrary. Then either $-p \in\Delta_{p}$ or $-p \in\partial
\Delta_{p}$. The first case cannot occur, otherwise%

\begin{align*}
\pi= d(p, -p) < d(g(p),-p)
\end{align*}

\noindent for $g \in\Gamma^{\ast}$. So, we must have $-p \in\partial\Delta
_{p}$. By Propositions \ref{prop1} and \ref{prop2}, there exists $g_{0}
\in\Gamma^{\ast}$ such that $-p \in\partial\Delta_{p} \cap\partial
\Delta_{g_{0}(p)} \subset A_{p, g_{0}(p)}$. Thus,%

\begin{align*}
\pi= d(p, -p) = d(g_{0}(p), -p)
\end{align*}

\noindent which implies that $g_{0}(p) = p$. This is an absurd, since no
element of $\Gamma^{\ast}$ has a fixed point.
\end{proof}

From Lemma \ref{antipodal}, the next fact, from \cite{ozols74}, applies:

\begin{prop}
[\cite{ozols74}, Corollary 3.11]\label{cut} If $\overline{\Delta}_{p} \cap
C(p) = \emptyset$, then $C(\pi(p)) = \pi(\partial\Delta_{p})$, where
$C(\cdot)$ denotes the cut locus.
\end{prop}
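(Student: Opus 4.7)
\emph{Plan of proof.} My strategy is to prove the two inclusions separately, using the standard characterization that a point $y$ belongs to the cut locus $C(x)$ of a Riemannian manifold if and only if either $y$ is the first conjugate point of $x$ along some minimizing geodesic, or there exist at least two distinct minimizing geodesics from $x$ to $y$. The hypothesis $\overline{\Delta}_p \cap C(p) = \emptyset$ is precisely what rules out the ``conjugate point'' mechanism inside the closed fundamental domain. Hence, downstairs, any cut point of $\pi(p)$ whose preimage can be found in $\overline{\Delta}_p$ must come from the ``multiple minimizers'' mechanism, which is produced here by nontrivial elements of $\Gamma$.

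\emph{The inclusion $\pi(\partial \Delta_p) \subseteq C(\pi(p))$.} For $q \in \partial \Delta_p$, Proposition \ref{prop2} gives $g \in \Gamma^{\ast}$ with $q \in \partial \Delta_p \cap \partial \Delta_{g(p)}$, and Proposition \ref{prop1} then yields $q \in A_{p,g(p)}$, i.e.\ $d(p,q) = d(g(p),q)$. Take minimizing geodesics $\gamma_1$ from $p$ to $q$ and $\gamma_2$ from $g(p)$ to $q$; both descend under $\pi$ to minimizing curves from $\pi(p)$ to $\pi(q)$ of the same length. If their projections coincided, unique lifting of geodesics would give $\gamma_2 = h \circ \gamma_1$ for some $h \in \Gamma$ with $h(p) = g(p)$ and $h(q) = q$; freeness of the action (which follows from proper discontinuity) forces $h = g$, whence $g(q) = q$, contradicting $g \in \Gamma^{\ast}$. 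So the two projected geodesics are distinct, and $\pi(q) \in C(\pi(p))$.

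\emph{The inclusion $C(\pi(p)) \subseteq \pi(\partial \Delta_p)$.} Given $x \in C(\pi(p))$, pick a preimage $q \in \pi^{-1}(x)$ minimizing the distance to $p$; the definition of $\Delta_p$ forces $q \in \overline{\Delta}_p$. Suppose, for contradiction, that $q$ lies in the open set $\Delta_p$. Then $q \notin C(p)$ by hypothesis, so the minimizing geodesic $\gamma$ from $p$ to $q$ in $\mathbb{S}^{n+1}$ is unique and free of conjugate points up to $q$; moreover, $d(p,q) < d(g(p),q)$ strictly for every $g \in \Gamma^{\ast}$. By continuity and compactness, this strict inequality persists on a slight extension $\tilde\gamma$ of $\gamma$ that still lies in $\Delta_p$, so $\tilde\gamma$ remains the unique globally shortest lift of $\pi \circ \tilde\gamma$. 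Projecting, $\pi \circ \tilde\gamma$ is a locally minimizing geodesic through $x = \pi(q)$ with no conjugate points nearby, contradicting $x \in C(\pi(p))$. Therefore $q \in \partial \Delta_p$ and $x \in \pi(\partial \Delta_p)$.

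\emph{Main obstacle.} The subtle step is the reverse inclusion: upgrading the two pointwise facts ``$q$ is not a cut point of $p$'' and ``$q$ is the unique closest lift of $x$'' into genuine \emph{local minimality} of $\pi \circ \gamma$ past $q$. The content is that the open fundamental domain $\Delta_p$ should behave like a ``normal coordinate chart'' for the covering map $\pi$, so that minimality upstairs transfers verbatim to the quotient. The key geometric input making this transfer work is that the uniform strict inequality $d(p,\cdot) < d(g(p),\cdot)$ holds on a whole neighborhood of $\gamma$, and this in turn relies on proper discontinuity together with the hypothesis $\overline{\Delta}_p \cap C(p) = \emptyset$, which keeps the $\Gamma$-translates of $p$ at a definite distance from the geodesic $\gamma$.
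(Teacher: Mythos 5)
The paper does not actually prove this proposition: it is imported verbatim from \cite{ozols74} (Corollary 3.11), with Lemma \ref{antipodal} serving only to verify its hypothesis. So there is no in-paper argument to compare against; your proposal is a self-contained proof, and it follows the standard route (essentially Ozols' own): the inclusion $\pi(\partial\Delta_p)\subseteq C(\pi(p))$ via two distinct minimizing geodesics produced by a nontrivial $g$ with $d(p,q)=d(g(p),q)$, and the reverse inclusion by showing that if a closest lift of $x$ lay in the open domain $\Delta_p$, the minimizing geodesic would continue to minimize past $x$ in the quotient. The argument is essentially sound, but two steps deserve tightening. First, ``the definition of $\Delta_p$ forces $q\in\overline{\Delta}_p$'' is not literal: minimality of $d(p,\cdot)$ over the fibre only places $q$ in $\bigcap_{g\in\Gamma^{\ast}}\{y : d(p,y)\le d(g(p),y)\}$, and the identification of this set with $\overline{\Delta}_p$ --- which is what lets you upgrade ``$q\notin\Delta_p$'' to ``$q\in\partial\Delta_p$'' --- is a separate, standard fact about Dirichlet domains that must be cited or proved (it sits alongside Propositions \ref{prop1} and \ref{prop2} in \cite{ozols74}). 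Second, your closing sentence derives the contradiction from $\pi\circ\tilde\gamma$ being ``locally minimizing with no conjugate points nearby'', which by itself does not exclude $x\in C(\pi(p))$; what is needed --- and what your inequalities actually deliver, since $d(\pi(p),\pi(y))=d(p,y)$ for every $y$ on the extension inside $\Delta_p$, while the extension stays minimizing upstairs because $-p\notin\overline{\Delta}_p$ --- is that $\pi\circ\tilde\gamma$ is \emph{globally} minimizing from $\pi(p)$ to a point beyond $x$, whence the minimizing geodesic to $x$ is unique, $x$ is not conjugate to $\pi(p)$ along it, and so $x\notin C(\pi(p))$. With those two repairs the proof is complete.
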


\begin{lem}
\label{inverse image} For $p \in\mathbb{S}^{n+1}$,%

\begin{align*}
\pi^{-1} \left(  \pi(\partial\Delta_{p}) \right)  = \bigcup_{g \in\Gamma}
\partial\Delta_{g(p)}.
\end{align*}

\end{lem}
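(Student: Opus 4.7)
The plan is to reduce the statement to a single equivariance identity, $g(\Delta_p) = \Delta_{g(p)}$, after which the lemma follows from elementary properties of quotient maps. First I would recall that for any subset $X \subset \mathbb{S}^{n+1}$,
\begin{align*}
\pi^{-1}(\pi(X)) = \bigcup_{g \in \Gamma} g(X),
\end{align*}
which is the standard description of the saturation of $X$ under a free group action. Applying this to $X = \partial \Delta_p$ reduces the problem to showing
\begin{align*}
g(\partial \Delta_p) = \partial \Delta_{g(p)} \quad \text{for every } g \in \Gamma.
\end{align*}

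Since $g$ is a homeomorphism (in fact an isometry), boundaries are preserved, so it suffices to prove $g(\Delta_p) = \Delta_{g(p)}$. Starting from the definition
\begin{align*}
\Delta_p = \bigcap_{g' \in \Gamma^{\ast}} H_{p, g'(p)},
\end{align*}
and using that $g$ is an isometry, a direct computation gives $g(H_{p, g'(p)}) = H_{g(p), gg'(p)}$, so
\begin{align*}
g(\Delta_p) = \bigcap_{g' \in \Gamma^{\ast}} H_{g(p), gg'(p)}.
\end{align*}
The key observation is then a reindexing: writing $h = gg'g^{-1}$, as $g'$ ranges over $\Gamma^{\ast}$, $h$ ranges over $\Gamma^{\ast}$ as well, and $gg'(p) = hg(p)$. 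Hence the intersection above equals $\bigcap_{h \in \Gamma^{\ast}} H_{g(p), h(g(p))} = \Delta_{g(p)}$, as desired.

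Putting the pieces together,
\begin{align*}
\pi^{-1}(\pi(\partial \Delta_p)) = \bigcup_{g \in \Gamma} g(\partial \Delta_p) = \bigcup_{g \in \Gamma} \partial(g(\Delta_p)) = \bigcup_{g \in \Gamma} \partial \Delta_{g(p)},
\end{align*}
which is the claim. There is no serious obstacle here: the only point that requires care is the reindexing step, where one must verify that the multiset $\{gg'(p) : g' \in \Gamma^{\ast}\}$ coincides with $\{hg(p) : h \in \Gamma^{\ast}\}$. This in turn relies on the action being free (a consequence of proper discontinuity together with the absence of fixed points established in the course of Lemma \ref{antipodal}), which ensures the orbit map is bijective and the conjugation substitution is well defined.
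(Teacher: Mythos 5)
Your proof is correct and follows the same route as the paper: the paper's proof is a one-line reduction to the equivariance identity $g(\partial\Delta_{p}) = \partial\Delta_{g(p)}$, which it cites from Ozols, whereas you verify it directly via the conjugation reindexing $g' \mapsto gg'g^{-1}$ on $\Gamma^{\ast}$. The only small inaccuracy is your closing remark: neither the saturation formula $\pi^{-1}(\pi(X)) = \bigcup_{g}g(X)$ nor the reindexing step actually requires freeness of the action, since the former is just the definition of orbit equivalence and the latter is pure group theory.
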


\begin{proof}
This follows from the easily verifiable fact that $g(\partial\Delta_{p}) =
\partial\Delta_{g(p)}$ (see \cite{ozols74}, Proposition 3.2 (3)).
\end{proof}

We are now ready to prove Theorem \ref{space form rigidity}.

\begin{proof}
[Proof of Theorem \ref{space form rigidity}]Since $\pi$ is a local isometry,
the principal curvatures of $M$ and $\tilde{M}$ coincide. Due to Theorem
\ref{sphere rigidity}, it thus suffices to prove that the open ball
$B_{r/2-R}(p_{0})$ does not intersect $\tilde{M}$, for then the ball
$B_{\pi-r/2+R}(-p_{0})$ contains any connected component of $\tilde{M}$. We
argue by contradiction. Suppose $q$ lies both in $B_{r/2-R}(p_{0})$ and in
$\tilde{M}$. Then $d(q, p_{0}) < r/2 - R$ and $d(\pi(q), C(x_{0})) \leq R$.
From Lemmas \ref{small ball} and \ref{isometry}, we have $d(\pi(q), x_{0}) <
r/2 - R$. Thus,%

\begin{align*}
d(x_{0}, C(x_{0}))  &  \leq d(x_{0}, \pi(q)) + d(\pi(q), C(x_{0}))\\
&  < \left(  \frac{r}{2} - R \right)  + R\\
&  = \frac{r}{2}.
\end{align*}

So, there exists $y \in C(x_{0})$ such that $d(x_{0}, y) < r/2$. By Lemma
\ref{isometry}, $d(\pi\vert_{\Delta_{p_{0}}}^{-1}(y), p_{0}) < r/2$, and by
Proposition \ref{cut} and Lemma \ref{inverse image},%

\begin{align*}
\pi\vert_{\Delta_{p_{0}}}^{-1}(y) \in\pi^{-1}(y) \subset\pi^{-1}(C(x_{0}))
\subset\pi^{-1} \left(  \pi(\partial\Delta_{p_{0}}) \right)  = \bigcup_{g
\in\Gamma} \partial\Delta_{g(p_{0})}.
\end{align*}

\noindent This contradicts Lemma \ref{small ball}, since $B_{r/2}(p_{0})
\cap\partial\Delta_{p_{0}} = \emptyset$. This concludes the proof.
\end{proof}

\end{document}